\DeclareMathAlphabet\gothic{U}{euf}{m}{n}
\def\eqnarray{\stepcounter{equation}\let\@currentlabel=\theequation
\global\@eqnswtrue
\tabskip\@centering\let\\=\@eqncr
$$\halign to \displaywidth\bgroup\hfil\global\@eqcnt\z@
  $\displaystyle\tabskip\z@{##}$&\global\@eqcnt\@ne
  \hfil$\displaystyle{{}##{}}$\hfil
  &\global\@eqcnt\tw@ $\displaystyle{##}$\hfil
  \tabskip\@centering&\llap{##}\tabskip\z@\cr}
\def\endeqnarray{\@@eqncr\egroup
      \global\advance\c@equation\m@ne$$\global\@ignoretrue}
\def\@yeqncr{\@ifnextchar [{\@xeqncr}{\@xeqncr[5pt]}}
\begin{document}
\bibliographystyle{tom}

\newtheorem{theorem}{Theorem}[section]
\newtheorem{thm}[theorem]{Theorem}
\newtheorem{lemma}[theorem]{Lemma}
\newtheorem{corollary}[theorem]{Corollary}
\newtheorem{cor}[theorem]{Corollary}
\newtheorem{proposition}[theorem]{Proposition}
\newtheorem{prop}[theorem]{Proposition}
\newtheorem{crit}[theorem]{Criterion}
\newtheorem{alg}[theorem]{Algorithm}

\theoremstyle{definition}

\newtheorem{definition}[theorem]{Definition}
\newtheorem{assu}[theorem]{Assumption}
\newtheorem{conj}[theorem]{Conjecture}
\newtheorem{exmp}[theorem]{Example}
\newtheorem{exam}[theorem]{Example}
\newtheorem{prob}[theorem]{Problem}
\newtheorem{rem}[theorem]{Remark}
\newtheorem{remarkn}[theorem]{Remark}

\theoremstyle{remark}

\newtheorem*{ack}{Acknowledgement}  



\newcommand{\gota}{\gothic{a}}
\newcommand{\gotb}{\gothic{b}}
\newcommand{\gotc}{\gothic{c}}
\newcommand{\gote}{\gothic{e}}
\newcommand{\gotf}{\gothic{f}}
\newcommand{\gotg}{\gothic{g}}
\newcommand{\gothh}{\gothic{h}}
\newcommand{\gotk}{\gothic{k}}
\newcommand{\gotl}{\gothic{l}}
\newcommand{\gotm}{\gothic{m}}
\newcommand{\gotn}{\gothic{n}}
\newcommand{\gotp}{\gothic{p}}
\newcommand{\gotq}{\gothic{q}}
\newcommand{\gotr}{\gothic{r}}
\newcommand{\gots}{\gothic{s}}
\newcommand{\gott}{\gothic{t}}
\newcommand{\gotu}{\gothic{u}}
\newcommand{\gotv}{\gothic{v}}
\newcommand{\gotw}{\gothic{w}}
\newcommand{\gotz}{\gothic{z}}
\newcommand{\gotA}{\gothic{A}}
\newcommand{\gotB}{\gothic{B}}
\newcommand{\gotG}{\gothic{G}}
\newcommand{\gotL}{\gothic{L}}
\newcommand{\gotS}{\gothic{S}}
\newcommand{\gotT}{\gothic{T}}

\newcounter{teller}
\renewcommand{\theteller}{(\alph{teller})}
\newenvironment{tabel}{\begin{list}%
{\rm  (\alph{teller})\hfill}{\usecounter{teller} \leftmargin=1.1cm
\labelwidth=1.1cm \labelsep=0cm \parsep=0cm}
                      }{\end{list}}

\newcounter{tellerr}
\renewcommand{\thetellerr}{(\roman{tellerr})}
\newenvironment{tabeleq}{\begin{list}%
{\rm  (\roman{tellerr})\hfill}{\usecounter{tellerr} \leftmargin=1.1cm
\labelwidth=1.1cm \labelsep=0cm \parsep=0cm}
                         }{\end{list}}

\newcounter{tellerrr}
\renewcommand{\thetellerrr}{(\Roman{tellerrr})}
\newenvironment{tabelR}{\begin{list}%
{\rm  (\Roman{tellerrr})\hfill}{\usecounter{tellerrr} \leftmargin=1.1cm
\labelwidth=1.1cm \labelsep=0cm \parsep=0cm}
                         }{\end{list}}

\newcounter{proofstep}
\newcommand{\nextstep}{\refstepcounter{proofstep}\vertspace \par 
          \noindent{\bf Step \theproofstep} \hspace{5pt}}
\newcommand{\firststep}{\setcounter{proofstep}{0}\nextstep}

\newcommand{\Ni}{\mathds{N}}
\newcommand{\Qi}{\mathds{Q}}
\newcommand{\Ri}{\mathds{R}}
\newcommand{\Ci}{\mathds{C}}
\newcommand{\Ti}{\mathds{T}}
\newcommand{\Zi}{\mathds{Z}}
\newcommand{\Fi}{\mathds{F}}
\newcommand{\field}[1]{\mathbb{#1}}
\newcommand{\I}{\field{I}}
\newcommand{\N}{\field{N}}
\newcommand{\R}{\field{R}}
\newcommand{\C}{\field{C}}

\renewcommand{\proofname}{{\bf Proof}}

\newcommand{\vertspace}{\vskip10.0pt plus 4.0pt minus 6.0pt}

\newcommand{\ad}{{\mathop{\rm ad}}}
\newcommand{\Ad}{{\mathop{\rm Ad}}}
\newcommand{\clalg}{{\mathop{\overline{\rm alg}}}}
\newcommand{\Aut}{\mathop{\rm Aut}}
\newcommand{\arccot}{\mathop{\rm arccot}}
\newcommand{\capp}{{\mathop{\rm cap}}}
\newcommand{\rcapp}{{\mathop{\rm rcap}}}
\newcommand{\diam}{\mathop{\rm diam}}
\newcommand{\divv}{\mathop{\rm div}}
\newcommand{\codim}{\mathop{\rm codim}}
\newcommand{\RRe}{\mathop{\rm Re}}
\newcommand{\IIm}{\mathop{\rm Im}}
\newcommand{\Tr}{{\mathop{\rm Tr \,}}}
\newcommand{\Vol}{{\mathop{\rm Vol}}}
\newcommand{\card}{{\mathop{\rm card}}}
\newcommand{\rank}{\mathop{\rm rank}}
\newcommand{\supp}{\mathop{\rm supp}}
\newcommand{\sgn}{\mathop{\rm sgn}}
\newcommand{\essinf}{\mathop{\rm ess\,inf}}
\newcommand{\esssup}{\mathop{\rm ess\,sup}}
\newcommand{\osc}{{\mathop{\rm osc}}}
\newcommand{\Int}{\mathop{\rm Int}}
\newcommand{\lcm}{\mathop{\rm lcm}}
\newcommand{\loc}{{\rm loc}}
\newcommand{\HS}{{\rm HS}}
\newcommand{\Trn}{{\rm Tr}}

\newcommand{\tr}{\operatorname{tr}}     
\newcommand{\dom}{\operatorname{Dom}}   
\newcommand{\dist}{\operatorname{dist}}
\newcommand{\mes}{\operatorname{mes}}   

\newcommand{\BR}{\color{blue}}
\newcommand{\ER}{\color{black}}

\newcommand{\at}{@}

\newcommand{\spann}{\mathop{\rm span}}
\newcommand{\one}{\mathds{1}}

\hyphenation{groups}
\hyphenation{unitary}

\newcommand{\ca}{{\cal A}}
\newcommand{\cb}{{\cal B}}
\newcommand{\cc}{{\cal C}}
\newcommand{\cd}{{\cal D}}
\newcommand{\ce}{{\cal E}}
\newcommand{\cf}{{\cal F}}
\newcommand{\ch}{{\cal H}}
\newcommand{\chs}{{\cal HS}}
\newcommand{\ci}{{\cal I}}
\newcommand{\ck}{{\cal K}}
\newcommand{\cl}{{\cal L}}
\newcommand{\cm}{{\cal M}}
\newcommand{\cn}{{\cal N}}
\newcommand{\co}{{\cal O}}
\newcommand{\cp}{{\cal P}}
\newcommand{\cs}{{\cal S}}
\newcommand{\ct}{{\cal T}}
\newcommand{\cx}{{\cal X}}
\newcommand{\cy}{{\cal Y}}
\newcommand{\cz}{{\cal Z}}
\newcommand{\m}{{\mu}}

\thispagestyle{empty}

\vspace*{1cm}
\begin{center}
{\large\bf H\"older estimates for parabolic operators \\on domains with rough boundary}
\\[5mm]
\large K. Disser (Berlin), A.F.M. ter Elst (Auckland) and J. Rehberg (Berlin)
\end{center}

\vspace{5mm}

\begin{list}{}{\leftmargin=1.8cm \rightmargin=1.8cm \listparindent=10mm
   \parsep=0pt}
\item
\small
{\sc Abstract}.
In this paper we investigate linear parabolic, second-order boundary
value problems with mixed boundary conditions on rough domains. 
Assuming only 
boundedness/ellipticity on the coefficient function and very mild
conditions on the geometry of the domain -- including a very weak
compatibility condition between the Dirichlet boundary part and its
complement -- we prove H\"older continuity of the solution in space and 
time. 

\end{list}

\let\thefootnote\relax\footnotetext{
\begin{tabular}{@{}l}
{\em AMS Subject Classification} 
{\bf 35K20, 35B45, 35B65, 35B05}.\\
{\em Keywords}. Parabolic initial boundary value problems, H\"older continuity 
\end{tabular}}


\section{Introduction} \label{Sholder1}
This
paper is concerned with parabolic initial-boundary value problems including mixed 
boundary conditions of the type
\begin{align}\label{e-prob}
u'(t,x)-\mathrm{div}(\mu(x)\nabla u)(t,x) & = f(t,x),& \text{in }(0,T)\times\Omega, \\
u(t,x) &= 0, & \text{on }(0,T)\times D, \nonumber\\ 
\mu(x) (\nabla u)(t,x) \cdot \nu(x) &= 0, & \text{on }(0,T)\times \Upsilon, \nonumber\\
u(0,x) &= u_0, &\text{in }\Omega,\nonumber
\end{align} 
where $D\subset \partial \Omega$ and $\Upsilon =\partial\Omega \setminus D$ are 
Dirichlet and Neumann boundary parts for the domain $\Omega\subset \R^d$ with 
outer normal vectors $\nu$ and $d \geq 2$. 
We show that both for all $f \in L^s((0,T);L^p(\Omega))$, with
$p\in (\frac{d}{2},\infty)$ and for all $f \in L^s((0,T);W^{-1,q}_D(\Omega))$, 
with $q \in (d,\infty)$, and $s$ sufficiently large,
the problem is well-posed and there exists a $\beta > 0$ such that the solution satisfies
$u \in C^\beta((0,T)\times \Omega)$, that is the solution
is H\"older continuous in space and time. 

H\"older continuity is one of the classical features in the theory for parabolic equations, 
where we refer to the initial work of Nash and Moser, \cite{Nash}, \cite{Mos},
\cite{Mos2} and to the extensive theory for parabolic initial-boundary
value problems developed in the monograph
\cite{LaSU}.
One of the main reasons for proving H\"older estimates is their usefulness in the 
investigation of nonlinear problems.
In \cite[p.~9]{LaSU} the domain $\Omega$ is assumed to be
`piecewise $C^1$ with nonzero interior angles'.
In the 
standard work of Lieberman \cite{Liebe3}, the domain is assumed to be Lipschitz. 
The main novelty of our results lies in reducing the assumptions on the parts 
$D$ and $\Upsilon$ of the boundary $\partial \Omega$ to include rough settings.
For the Dirichlet part $D$, we merely require the outer volume 
condition (see e.g.\
\cite[Chapter~II Theorem~B.4]{KinS}), which is classical for the elliptic pure Dirichlet problem.
In particular, the domain may be rough in that the inner volume condition is not required. 
The second achievement is that we can considerably weaken the conditions on the 
relative boundary of the Dirichlet and Neumann boundary part in that we replace the 
geometrical condition established in \cite{Groe} 
(compare also \cite{GKR}
\cite{HiebR}, \cite{Grie2}, \cite{Grie4}) by a measure theoretic one.
Roughly speaking, it states that in balls around points in the intersection 
$D \cap \overline{\Upsilon}$, the  Dirichlet boundary part is not rare 
(in a certain quantitative sense)
with respect to the boundary measure, see (\ref{e-masstheo}) below. 
This reflects the fact that H\"older continuity
for the elliptic Dirichlet problem also requires only a measure theoretic assumption \cite{Stam2}. 
Our framework is thus much broader than the classical one and allows for interesting new cases.
In particular, the Dirichlet boundary part need not be (part of) a continuous boundary
in the sense of \cite[Definition~1.2.1.1]{Gris} and the domain is not required to `lie on one 
side of the Dirichlet boundary part', see Figure \ref{fig}.

\begin{figure}
\centerline{\includegraphics[scale=0.25]{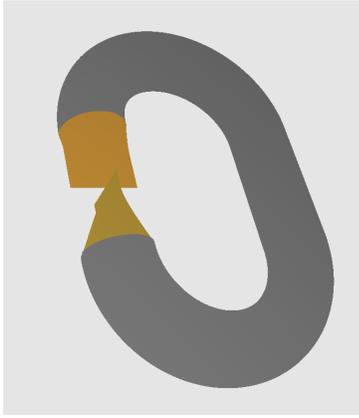}}
\caption{\label{fig} {The light coloured part of the boundary carries Dirichlet boundary conditions,
 the dark coloured part has Neumann boundary conditions.}}
\end{figure}

Under these more general assumptions on the geometry, we essentially reproduce the classical
parabolic H\"older theory in \cite{LaSU}, however, in our case, in the standard H\"older spaces, 
but with the coefficients independent of time. 
This rests on the fact that our prescribed Dirichlet data are identically zero, 
whereas in \cite{LaSU}, more general data are admitted. 

Our paper is an extension of the results provided in \cite{ERe2}, 
where the geometric setting was developed and where H\"older regularity for the 
elliptic problem as well as Gaussian H\"older kernel bounds on the semigroup were proved.
We show that space-time H\"older continuity for the parabolic problem with 
$f \in L^s((0,T);L^p(\Omega))$ follows essentially by employing maximal parabolic 
regularity and interpolation. 

The second main point of this paper is to study the case $f \in L^s((0,T);W^{-1,q}_D(\Omega))$ 
and provide a similar result.  
A motivation for including distributional right-hand-sides is given at the 
beginning of Section~\ref{s-parabolicLp} and we refer to Section~\ref{s-pr} for a 
precise definition of $W^{-1,q}_D(\Omega)$.
The method of proof transfers from the $L^p$-setting to the $W_D^{-1,q}$-setting 
due to an abstract relation of the fractional powers of the elliptic operator 
considered in $L^p$ and in $W_D^{-1,q}$, respectively, see Lemma~\ref{l-domApAq}.
We prove this property under slightly less general assumptions on $D$ and on the 
coefficient function $\mu$, cf.\ Assumptions~\ref{a-Ahlf} and \ref{assu-coeffi}.

The outline of the paper is as follows. 
In Section~\ref{s-pr} we provide basic definitions, the 
main assumptions and preliminary results. 
In Section~\ref{s-parabolicLp} we study problem \eqref{e-prob} in 
$L^s((0,T);L^p(\Omega))$ and prove that 
solutions are H\"older continuous in space and time. 
In Section~\ref{SHoelpara4} a similar result is proved for $f\in L^s((0,T);W^{-1,q}_D(\Omega))$.

\begin{ack}
	K. Disser was supported by the European 
	Research Council via ERC-2010-AdG no.\ 267802 \emph{(Analysis of
	Multiscale Systems Driven by Functionals)}.
	\end{ack}


\section{Preliminary results}\label{s-pr}

Fix $d \in \{ 2,3,\ldots \} $.
Let $\Omega \subset \R^d$ be a bounded domain and let $D$ be a 
closed subset of $\partial \Omega$.
We define
\[
C^\infty_D(\Omega)
:=\{w|_\Omega: w \in C_c^\infty(\R^d) \mbox{ and } 
   \supp w \cap D =\emptyset \}.
\]
Note that if $D =\partial \Omega$, then $C_{\partial \Omega}^\infty(\Omega) = C_c^\infty(\Omega)$.
For all $p \in [1,\infty)$, we denote the closure of $C^\infty_D(\Omega)$ in 
$W^{1,p}(\Omega)$ by $W^{1,p}_D(\Omega)$, where $W^{1,p}(\Omega)$ is the usual complex 
Sobolev space of order~$1$. 
If $p \in (1,\infty]$, then
the space $W^{-1,p}_D(\Omega)$ is the anti-dual of $W^{1,p'}_D(\Omega)$
in $L^p(\Omega)$, where 
$p'$ is the conjugate index for $p$, so $\frac{1}{p} + \frac{1}{p'} = 1$.
The domain $\Omega$ remains fixed throughout the paper, and hence we
omit $\Omega$ in the notation of all function spaces. 
For example, we write $L^p$ instead of $L^p(\Omega)$.

We always assume that the coefficient function $\mu \colon \Omega \to \R^{d \times d}$
is bounded, measurable and satisfies the ellipticity condition, that is,
there exists a $\underline \mu > 0$ such that 
\[
\RRe \sum_{i,j=1}^d \mu_{ij}(x) \, \xi_i \, \overline{\xi_j} 
\geq \underline \mu \, |\xi|^2
\]
for almost all $x \in \Omega$ and for all $\xi \in \Ci^d$, where $\mu_{ij}(x)$ denote the 
matrix coefficients of $\mu(x)$ in Euclidean coordinates.
We define the sesquilinear 
form $\gotl \colon W^{1,2}_D \times W^{1,2}_D \to \Ci$ by
\[
\gotl(u,v)
= \int_\Omega \sum_{i,j=1}^d \mu_{ij} \, (\partial_i u) \, \overline{(\partial_j v)}
{}.  \]
Then $\gotl$ is closed and sectorial.  
Next we define $\ca \colon W^{1,2}_D \to W^{-1,2}_D$ by 
\[
\langle \ca u,v\rangle = \gotl(u,v).
\]
If $q \in (2,\infty)$, then define the operator $\ca_q \colon \dom(\ca_q) \to W^{-1,q}_D$
by 
\[
\dom(\ca_q) = \{ u \in W^{1,2}_D : \ca u \in W^{-1,q}_D \} 
\]
and $\ca_q u = \ca u$ for all $u \in \dom(\ca_q)$.
We consider $\ca_q$ as an unbounded operator in $W^{-1,q}_D$.
Similarly, let $A$ be the $m$-sectorial operator associated with $\gotl$ in $L^2$.

\begin{rem}\label{r-bc}
If $\Omega$ satisfies suitable regularity conditions, then the elements $u \in \dom(A)$
satisfy the conditions $u|_{D}=0$ in the sense of traces and 
$\nu \cdot (\mu \nabla u) =0 $ on $\partial \Omega \setminus D$ in a generalized sense, 
cf.\ \cite[Chapter~1.2]{Cia}, \cite[Chapter~II.2]{GGZ}) or \cite[Chapter~3.3.2]{Lio3}.
Thus, the operator $A$ realizes 
mixed boundary conditions and provides solutions for \eqref{e-prob} in a generalized sense.  
\end{rem}

We call
$D$ the Dirichlet (boundary) part and 
\[
\Upsilon:=\partial \Omega \setminus D
\] the Neumann (boundary)
 part of $\partial \Omega$. 

It is easy to see that the form $\gotl$ satisfies the Beurling--Deny criteria
(see \cite[Corollary~2.17]{EMR}, \cite[Section~2.3]{HKrR} or \cite[Section~4.3]{Ouh5}). 
Hence the semigroup $S$ generated by $-A$ extends consistently to a contraction semigroup
$S^p$ in $L^p(\Omega)$ for all $p \in [1,\infty]$ and $S^p$ is a $C_0$-semigroup
for all $p \in [1,\infty)$.
Let $-A_p$ denote the generator of $S^p$.
If $p \in (2,\infty)$ then $\dom(A_p) = \{ u \in \dom(A) \cap L_p(\Omega) : A u \in L_p(\Omega) \} $
and if $p \in [1,2)$ then $A_p$ is the closure of $A$.
If no confusion is possible, then we write $S = S^p$.

We denote by 
\[
E = \{x = (\tilde {x},x_d): -1 < x_d < 1 
\mbox{ and } \|\tilde {x}\|_{\R^{d-1}}<1\}
\]
the open cylinder in $\R^d$. 
Its lower half is denoted by $E^- = \{ x \in E : x_d < 0 \} $ and 
\[
P = E \cap \{x \in \R^d : x_d=0 \}
  \] 
  is its midplate. 
Furthermore, for all $n\in \mathbb {N}$ and $x \in \R^n$ let $B^n_R(x)$ denote the ball
in $\R^n$ with radius $R$ and centre $x$.
By
$\mathcal H_n$ we denote the $n$-dimensional Hausdorff measure.
We denote the volume of a measurable subset $F \subset \Ri^d$ by $|F|$
and the volume of a measurable subset $F \subset \Ri^{d-1}$ by $\mes_{d-1}(F)$. 

Let $\Omega \subset \R^d$ be open, $M \subset \partial \Omega$ and $\alpha \in (0,1]$.
Then, following \cite[Definition~II.C.1]{KinS} and \cite[Section~1.1]{LaSU}, 
we say that $M$ is of class $(\bf {A_\alpha})$ 
 if 
\[
|B^d_R(x) \setminus  \Omega| \geq \alpha \, |B^d_R(x)|
\]
for all $R \in (0,1]$ and $x \in M$.
It is not hard to see that the boundary of any Lipschitz domain is of class 
$(\bf {A_\alpha})$ for a suitable $\alpha > 0$.
Finally, let us recall the concept of a positive operator, cf.\ \cite[Subsection~1.14.1]{Tri}.

\begin{definition} \label{d-posopera}
A densely defined operator $B$ on a Banach space $X$ is called \emph{positive}, if
there is a $c > 0$ such that 
\[
\|(B+\lambda)^{-1} \|_{\mathcal L(X)} \le \frac {c}{1+\lambda}
\]
for all $\lambda \in [0,\infty)$.
\end{definition}

We next introduce three assumptions  
on the domain $\Omega$ and its boundary $\partial \Omega$. 
Recall that $\Upsilon =\partial \Omega \setminus D$ is the Neumann part of $\partial \Omega$. 

\begin{assu} \label{a-LIps}
For all $x \in \overline \Upsilon$ there is an open neighbourhood
$U_x$ and a bi-Lipschitz map $\phi_x$ from a neighbourhood of 
$\overline{ U_x}$ onto an open subset of $\R^d$, such that 
$\phi_x(U_x)=E$, 
$\phi_x( \Omega \cap  U_x)=E^-$, 
$\phi_x(\partial \Omega \cap  U_x)=P$ and $\phi_x(x)=0$.
\end{assu}

\begin{assu} \label{a-Aalpha}
There is an $\alpha > 0$ such that 
the set $D$ is of class $(\bf {A_\alpha})$.
\end{assu}

\begin{assu} \label{a-RandUps}
Let $\partial \Upsilon$ be the boundary of $\Upsilon$ in $\partial \Omega$. 
For all $x \in \partial \Upsilon$, there are $c_0 \in (0,1)$ and $c_1 >0$ such that
\begin{equation} \label{e-masstheo}
\mes_{d-1} \{\tilde z \in B^{d-1}_R(\tilde {y}) :
\dist(\tilde z,\phi_x(\Upsilon \cap U_x))>c_0 \, R \} 
\ge c_1 \, R^{d-1}
\end{equation}
for all $R \in (0,1]$ and 
$\tilde {y} \in \Ri^{d-1}$ with 
$(\tilde {y},0) \in  \phi_x(\partial \Upsilon \cap U_x)$,
where $U_x$ and $\phi_x$ are as in Assumption \ref{a-LIps}.
\end{assu}

We would like to remark on two consequences of these assumptions.

\begin{rem}
Assumptions \ref{a-LIps} and \ref{a-Aalpha} exclude the presence of cracks in 
$\Omega$ as these cracks would include boundary points which 
satisfy neither the $(\bf {A_\alpha})$-condition nor do they allow for a 
Lipschitz chart satisfying Assumption \ref{a-LIps}.
\end{rem}

\begin{rem}\label{r-RandUps}
Assumption~\ref{a-RandUps} implies  
the `lower bound' in the Ahlfors--David condition (cf.\ \cite[Chapter~II]{JW}),
i.e.\ there is a $\check c_1 >0$ such that 
\[
\mathcal H_{d-1}(D \cap B^d_R(x)) \ge \check c_1 R^{d-1}
\]
for all $x \in D$ and $R \in (0,1]$.
See also \cite[Lemma 5.4]{ERe2}.
\end{rem}

In the sequel we collect results of foregoing papers which will enable us to prove
 parabolic  H\"older estimates.
The following result was shown in \cite[Theorem 1.1]{ERe2}.

\begin{theorem} \label{t-dir513}
Let $\Omega \subset \R^d$ be a bounded domain and $D$ a closed subset 
of the boundary $\partial \Omega$.
Suppose that Assumptions {\rm \ref{a-LIps}}, {\rm \ref{a-Aalpha}} and {\rm \ref{a-RandUps}} are valid.
Then for all $q\in (d,\infty)$ 
there exists a $\kappa>0$ such that 
$\dom(\ca_q) \subset C^\kappa$. 
\end{theorem}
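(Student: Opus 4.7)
The plan is to use Morrey/Campanato characterization: to show $u\in C^\kappa$ it suffices, by Campanato's theorem, to establish a local oscillation decay
\[
\osc_{B^d_r(x)\cap \Omega} u \le C\, r^\kappa
\]
uniformly for $x\in\overline\Omega$ and $r\in(0,r_0]$, with $C$ depending only on $\|u\|_{W^{1,2}_D}$ and $\|\ca_q u\|_{W^{-1,q}_D}$. Since $q>d$, a forcing term $F \in W^{-1,q}_D$ can be written as $F=\divv g$ with $g\in L^q$, and a Cacciopoli-type energy inequality on $B^d_r(x)\cap\Omega$ will carry a lower-order term of the form $r^{d-d/q}\|g\|_{L^q}^2$, which is admissible for Morrey--Campanato. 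So the real work is proving the oscillation decay.

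My plan is to split into four cases according to the location of the reference point $x$. \emph{Case 1 (interior).} For $x\in\Omega$ with $B^d_{2r}(x)\subset\Omega$, the classical De~Giorgi--Nash--Moser theorem for divergence-form operators with bounded measurable coefficients gives $\osc u \lesssim r^{\kappa_0}$ directly. \emph{Case 2 (pure Neumann).} For $x\in\Upsilon\setminus\partial\Upsilon$, use the bi-Lipschitz chart $\phi_x$ from Assumption~\ref{a-LIps} to flatten a neighbourhood to $E^-$ with a conormal condition on $P$. Extend $u\circ\phi_x^{-1}$ by even reflection across $P$ and the coefficients accordingly; the reflected problem is a divergence-form equation on $E$ with bounded measurable elliptic coefficients, so De~Giorgi--Nash--Moser applies again. \emph{Case 3 (pure Dirichlet, i.e.\ $x\in D$ away from $\overline\Upsilon$).} Use Assumption~\ref{a-Aalpha}: $D$ is of class $(\mathbf{A_\alpha})$, so the complement of $\Omega$ satisfies the outer volume condition at $x$. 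The classical Stampacchia/Maz'ya boundary Hölder estimate (via a Wiener-type capacity argument, see \cite[Chapter~II Theorem~B.4]{KinS}) then yields $\osc u \lesssim r^{\kappa_1}$ with the zero Dirichlet data.

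\emph{Case 4 (transition points $x\in\partial\Upsilon$).} This is where Assumption~\ref{a-RandUps} is essential and is the step I expect to be the main obstacle. After flattening by $\phi_x$, the problem becomes a mixed Dirichlet/Neumann problem on $E^-$ where the Neumann part sits on $\phi_x(\Upsilon\cap U_x)\subset P$ and the Dirichlet part is $P\setminus\phi_x(\Upsilon\cap U_x)$. Reflect $u$ evenly across $P$: on the reflected cylinder $E$ we obtain a Dirichlet problem with zero boundary data on the set
\[
\widetilde D_r := \{(\tilde z,0) : \tilde z \in B^{d-1}_r(\tilde y),\ \dist(\tilde z,\phi_x(\Upsilon\cap U_x))>c_0 r\}\cup (\text{reflected copy}).
\]
Assumption~\ref{a-RandUps} guarantees $\mes_{d-1}(\widetilde D_r)\gtrsim r^{d-1}$, so $\widetilde D_r$ carries uniformly positive $(d-1)$-dimensional capacity at the right scale. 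By the Maz'ya/Wiener boundary Hölder estimate for the Dirichlet problem with a capacitary fat obstacle, this is enough to propagate the oscillation decay through the reflected problem and transfer it back via the bi-Lipschitz map (bi-Lipschitz maps preserve Hölder regularity). The delicate point is checking that the capacity estimate needed by the Wiener criterion survives the reflection and the bi-Lipschitz distortion, and that it holds \emph{uniformly} in $r\in(0,1]$ and in $x\in\partial\Upsilon$; Assumption~\ref{a-RandUps} is tailor-made for this, and Remark~\ref{r-RandUps} guarantees a lower Ahlfors--David bound on $D$ that furnishes the capacitary density.

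Once the four local oscillation estimates are in place with a common exponent $\kappa := \min(\kappa_0,\kappa_1,\kappa_2,\kappa_3)>0$, a standard finite covering of $\overline\Omega$ by neighbourhoods of the four types, together with the Campanato characterization, produces $u\in C^\kappa$ and yields the stated inclusion $\dom(\ca_q)\subset C^\kappa$.
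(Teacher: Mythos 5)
The paper does not prove Theorem~\ref{t-dir513} at all: it is quoted from \cite[Theorem~1.1]{ERe2}, so there is no internal proof to compare against. Your outline is nevertheless essentially the strategy one expects (and, in broad strokes, the one carried out in \cite{ERe2}): reduce to a uniform local oscillation decay, handle interior points by De~Giorgi--Nash--Moser, flatten and reflect near the Neumann part, use the outer volume condition $(\mathbf{A_\alpha})$ and a Stampacchia/Maz'ya capacity-density argument on the pure Dirichlet part, and treat the transition set $\partial\Upsilon$ by reflecting and regarding the Dirichlet portion of the midplate as a capacitarily fat obstacle, with Assumption~\ref{a-RandUps} supplying the uniform $(d-1)$-measure density at every scale. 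The role you assign to $q>d$ is also the correct one, although the exponent in your Caccioppoli remainder should read $r^{d-2d/q}$; the Morrey--Campanato scheme requires $d-2d/q>d-2$, which is exactly $q>d$.

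That said, the proposal is an outline rather than a proof, and the step you yourself flag as the main obstacle is precisely the content of the theorem. Three points are asserted rather than established. First, after even reflection across $P$ the reflected function is a weak solution only in $E$ minus the closed Dirichlet portion of $P$, and one must verify both that the Maz'ya/Wiener oscillation estimate applies with this excised thin set and that a subset of a hyperplane with $\mes_{d-1}\ge c_1R^{d-1}$ in every ball has $2$-capacity $\ge cR^{d-2}$ with $c$ depending only on $c_0,c_1,d$; this is true, but it is the quantitative heart of the argument and needs a proof or a precise citation. Second, the constants in Assumption~\ref{a-RandUps} and the bi-Lipschitz constants of the charts $\phi_x$ are local in $x$, so uniformity of the H\"older exponent and constant must be recovered by a compactness argument that also tracks how the charts distort capacities and radii. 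Third, your four cases do not literally cover $\overline\Omega$ with a uniform radius: a point of $D$ very close to, but not on, $\partial\Upsilon$ falls under none of Cases 2--4 as stated, so one needs a dichotomy on $\dist(x,\overline\Upsilon)$ versus $r$, or a chaining argument, to propagate the decay from the finitely many chart centres. None of these is a fatal flaw, but closing them is where the actual work of \cite{ERe2} lies.
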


The next theorem concerns properties of $A_p$ and the semigroup 
$S$ generated by $-A$.

\begin{theorem} \label{t-sgrp0}
Let $\Omega \subset \R^d$ be a bounded domain and $D$ a closed subset 
of the boundary $\partial \Omega$.
Suppose that Assumption {\rm \ref{a-LIps}} holds true.
Then one has the following.
\begin{tabel}
\item \label{i-Hinfty} 
For all $p\in(1,\infty)$ and $\lambda_0 >0$ the operator $A_p + \lambda_0$ has a 
bounded $H^\infty$-calculus. 
In particular, $A_p + \lambda_0$ is a positive operator with bounded imaginary powers.
\item \label{i-MR} 
If $p\in(1,\infty)$, then $A_p$ has maximal parabolic $L^s((0,T);L^p)$-regularity.
\item \label{i-upperGauss}
The semigroup $S$ has a kernel $K$ satisfying Gaussian upper bounds.
Stronger: for all $\omega > 0$ there are
$b,c>0$ such that 
\begin{equation}\label{i-kest1}
|K_t(x,y)|
\leq c \, t^{-d/2} \, e^{-b \frac{|x-y|^2}{t}} \, e^{\omega t}
\end{equation}
for all $x,y \in \Omega$ and $t > 0$.
\item \label{i-lrlp}
For all $\omega > 0$ there exists a $c > 0$ such that 
\begin{equation}\label{i-sgest}
\|S_t\|_{\mathcal{L}(L^p,L^r)}
\leq ct^{-\frac{d}{2}(\frac{1}{p}-\frac{1}{r})}e^{\omega t}
\end{equation}
for all $t\in (0,\infty)$ and $p,r \in [1,\infty]$ with $p \leq r$.
\end{tabel}
\end{theorem}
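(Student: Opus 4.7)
The theorem collects four structural properties of $A_p$ and the semigroup $S$ that should follow from the single geometric Assumption~\ref{a-LIps}. The natural order of proof is (iii)~$\Rightarrow$~(iv)~$\Rightarrow$~(i)~$\Rightarrow$~(ii), since the Gaussian kernel bound is the core analytic input from which the bounded $H^\infty$-calculus, the maximal regularity, and the $L^p$--$L^r$ smoothing estimates all follow by standard abstract machinery.

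For (iii) the plan is to combine the Beurling--Deny property of $\gotl$ (recalled just before the theorem) with a Sobolev embedding for $W^{1,2}_D$. Assumption~\ref{a-LIps} provides bi-Lipschitz flattening charts near every point of $\overline{\Upsilon}$: locally one may extend functions across the midplate $P$ by reflection, while near points of $D \setminus \overline{\Upsilon}$ the definition of $W^{1,2}_D$ as the closure of $C^\infty_D$ permits extension by zero. A partition of unity then yields a bounded extension operator $W^{1,2}_D \hookrightarrow W^{1,2}(\R^d)$, and hence the Sobolev embedding $W^{1,2}_D \hookrightarrow L^{2d/(d-2)}$ (respectively $L^r$ for every $r<\infty$ if $d=2$). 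Applied to the shifted form $\RRe \gotl + \omega \|\cdot\|_{L^2}^2$, which is coercive on $W^{1,2}_D$ for $\omega$ sufficiently large, this produces a Nash inequality and therefore the on-diagonal ultracontractive bound $\|S_t\|_{L^1\to L^\infty} \le c \, t^{-d/2} e^{\omega t}$. Davies' exponential perturbation method, applied to the twisted form $\gotl_\rho(u,v) := \gotl(e^{-\rho\psi}u, e^{\rho\psi}v)$ with Lipschitz weights $\psi$ satisfying $\|\nabla\psi\|_\infty \le 1$, then promotes this to the pointwise Gaussian bound (\ref{i-kest1}) after optimising in $\rho$.

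Statement (iv) is then immediate from (iii) by integrating the kernel bound against $y$ (and, by duality, against $x$) and interpolating with the $L^2$-contraction estimate. For (i), having (iii) at hand one invokes the Duong--Robinson type theorem on bounded $H^\infty$-calculus for form operators with Gaussian heat-kernel bounds: this delivers a bounded $H^\infty(\Sigma_\theta)$-calculus of $A_p + \lambda_0$ on $L^p$ for every $p \in (1,\infty)$ and every $\theta$ exceeding the sectoriality angle of $\gotl$, and bounded imaginary powers follow by evaluating the calculus on $z \mapsto z^{is}$. Positivity in the sense of Definition~\ref{d-posopera} is immediate from sectoriality of the shifted operator. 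Finally, (ii) follows from (i) via Weis' theorem: a positive operator with bounded $H^\infty$-calculus on a UMD space has maximal $L^s$-regularity; alternatively one may combine the bounded imaginary powers delivered by (i) with Dore--Venni, and then pass from $A_p+\lambda_0$ to $A_p$ by a standard shift argument on the bounded interval $(0,T)$.

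The principal obstacle is the Davies step, i.e.\ the passage from on-diagonal ultracontractivity to pointwise Gaussian bounds in the presence of a non-symmetric, complex matrix $\mu$ together with mixed Dirichlet--Neumann boundary conditions. The perturbation trick is classical for symmetric real forms, but here one must verify that the twisted forms $\gotl_\rho$ remain uniformly sectorial on $W^{1,2}_D$, with perturbation terms absorbable by the ellipticity constant $\underline\mu$ and with the $\rho$-dependence producing exactly the exponential off-diagonal decay $e^{-b |x-y|^2 / t}$ in (\ref{i-kest1}). Everything else in the proof is a compilation of by-now standard results from the abstract theory of sectorial operators.
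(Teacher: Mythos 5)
Your argument is workable, but it takes a genuinely different and much more labour-intensive route than the paper, whose proof of Theorem~\ref{t-sgrp0} is essentially a compilation of citations. For Statements~\ref{i-Hinfty} and \ref{i-MR} the paper does not go through the heat kernel at all: since $\gotl$ satisfies the Beurling--Deny criteria, $S^p$ is a contraction semigroup on every $L^p$, and the bounded $H^\infty$-calculus then comes directly from the Cowling/Le~Merdy/Le~Merdy--Xu results for analytic contraction semigroups, while maximal regularity comes from Lamberton's theorem; neither step uses Assumption~\ref{a-LIps} or any kernel bound. Your chain (iii)$\Rightarrow$(iv)$\Rightarrow$(i)$\Rightarrow$(ii) via Duong--Robinson and Weis/Dore--Venni is legitimate (the $L^2$-calculus angle of the form operator is below $\pi/2$, so the angle survives the extrapolation and maximal regularity follows), but it makes \ref{i-Hinfty} and \ref{i-MR} depend on the geometric hypothesis when they need not, and it front-loads all the work into the Gaussian bound. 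For Statement~\ref{i-upperGauss} the paper simply invokes \cite[Theorem~3.1]{ERe1}, which carries out precisely the programme you sketch (reflection/zero-extension of $W^{1,2}_D$ across the charted and uncharted boundary parts, Sobolev embedding, ultracontractivity, Davies' twisting for the real non-symmetric form with $e^{\pm\rho\psi}W^{1,2}_D\subset W^{1,2}_D$); so what you flag as the ``principal obstacle'' is exactly the content of the cited reference rather than a gap. One point you should not gloss over is the quantifier ``for all $\omega>0$'' in \eqref{i-kest1}: the cited Gaussian bound holds a priori only for \emph{some} $\omega$, and the paper needs an extra step -- improving $\|S_t\|_{\mathcal{L}(L^2,L^\infty)}$ to $c\,t^{-d/4}(1+t)^{d/4}$ via $L^2$-contractivity and then interpolating $|K_t(x,y)|\leq\|S_t\|_{\mathcal{L}(L^1,L^\infty)}^{1-\varepsilon}|K_t(x,y)|^{\varepsilon}$ -- to make the exponential rate arbitrarily small. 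Your Nash/Davies argument can be run so as to produce an arbitrarily small rate (since $\RRe\gotl$ is nonnegative, the shifted form is coercive for every $\omega>0$), but as written the sketch does not address this, and it is the one place where the stated conclusion is strictly stronger than what the naive optimisation yields.
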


\begin{proof}
Since $S^p$ is a contraction semigroup, Statement~\ref{i-Hinfty} follows
from \cite{Cow1,LeMX,LeMerdy} and Statement~\ref{i-MR} from \cite{Lamb}.

By \cite[Theorem~3.1]{ERe1} there are $b,c,\omega > 0$ such that 
(\ref{i-kest1}) is valid for all $x,y \in \Omega$ and $t > 0$.
Hence there are $c,\omega > 0$ such that (\ref{i-sgest}) is valid for all 
$t \in (0,\infty)$ and $p,r \in [1,\infty]$ with $p \leq r$.
Since $S$ is a contraction semigroup on $L^2$, the bounds on 
$\|S_t\|_{\mathcal{L}(L^2,L^\infty)}$ can be improved by using \cite[Lemma~6.5]{Ouh5}
and there exists a $c > 0$ such that 
$\|S_t\|_{\mathcal{L}(L^2,L^\infty)} \leq c t^{-d/4} (1+t)^{d/4}$
for all $t > 0$.
Duality gives that there exists a $c > 0$ such that 
\[
\|S_t\|_{\mathcal{L}(L^1,L^\infty)} 
\leq c t^{-d/2} (1+t)^{d/2}
\leq c \varepsilon^{-d/2} t^{-d/2} e^{\varepsilon d t/2}
\]
for all $t > 0$ and $\varepsilon \in (0,1]$.
Since $|K_t(x,y)| \leq \|S_t\|_{\mathcal{L}(L^1,L^\infty)}^{1-\varepsilon} |K_t(x,y)|^\varepsilon$
the Gaussian bounds of \cite[Theorem~3.1]{ERe1} give Statement~\ref{i-upperGauss}.
Then Statement~\ref{i-lrlp} follows directly from Statement~\ref{i-upperGauss}. 
\end{proof}

The last two statements in the following theorem 
are corollaries to the results in \cite{ERe2}.

\begin{theorem} \label{t-sgrp}
In addition to the assumptions of Theorem {\rm \ref{t-sgrp0}}, suppose that 
Assumptions~{\rm \ref{a-Aalpha}} and {\rm \ref{a-RandUps}} are valid. 
Then one has the following.
\begin{tabel}
\item \label{i-GHB}
The kernel $K$ of the semigroup $S$ satisfies Gaussian H\"older kernel bounds, 
i.e.\ there are $\kappa_*,b,c,\omega>0$ such that 
\begin{equation}\label{i-kest2}
|K_t(x,y) - K_t(x',y')|
\leq c \, t^{-d/2} \Big( \frac{|x-x'| + |y-y'|}{t^{1/2}} \Big)^{\kappa_*} 
    e^{-b \frac{|x-y|^2}{t}} \, e^{\omega t}
\end{equation}
for all $x,x',y,y' \in \Omega$ and $t > 0$ with 
$|x-x'| + |y-y'| \leq t^{1/2}$.
\item \label{i-Hkappa}
There exists a $c > 0$ such that 
\[
\|S_t\|_{\mathcal{L}(L^p,C^{\kappa})}
\leq c \, t^{-\frac{d}{2p}-\frac{\kappa}{2}}e^{\omega t}
\]
for all $p\in[1,\infty]$, $\kappa \in (0,\kappa_*]$
and $t\in(0,\infty)$, where $\kappa_*$ and $\omega$ are as in {\rm \ref{i-GHB}}.
\item  \label{t-sgrp-7}
Let $\kappa_*$ be as in {\rm \ref{i-GHB}}.
Then 
\[
\dom\bigl (A_p^\theta\bigr ) \hookrightarrow C^\kappa.
\]
for all $p \in [1,\infty)$, $\kappa\in (0,\kappa_*]$ 
and $\theta \in (\frac{d}{2p}+\frac{\kappa}{2},\infty)$.
\end{tabel}
\end{theorem}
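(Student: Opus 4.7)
The plan is to prove the three parts sequentially, with (i) providing the analytic bedrock that powers (ii) and (iii). For (i), I would simply invoke the main result of \cite{ERe2}: the authors state explicitly that the last two statements are the novel corollaries here, which indicates (i) is a direct transcription of a theorem of \cite{ERe2} valid under Assumptions \ref{a-LIps}, \ref{a-Aalpha} and \ref{a-RandUps}.

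For (ii), the strategy is to pass from the kernel estimate \eqref{i-kest2} to a smoothing bound for functions by integrating against $|f|$. Given $f \in L^p$, I would write
\[
S_t f(x) - S_t f(x') = \int_\Omega \bigl(K_t(x,y) - K_t(x',y)\bigr) \, f(y) \, dy
\]
and split into two regimes. When $|x-x'| \leq t^{1/2}$, apply \eqref{i-kest2} with $y' = y$ and use H\"older's inequality in $y$; since the $L^{p'}$-norm of the Gaussian $y \mapsto e^{-b|x-y|^2/t}$ is bounded by $C t^{d/(2p')}$, this produces the power $t^{-d/2 - \kappa/2 + d/(2p')} = t^{-d/(2p) - \kappa/2}$. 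When $|x-x'| > t^{1/2}$, bound the increment by $2\|S_t f\|_\infty$ using Theorem \ref{t-sgrp0}\ref{i-lrlp} and absorb the trivial factor $(|x-x'|/t^{1/2})^\kappa \geq 1$. The sup norm of $S_t f$ is dominated by the same expression (a spurious factor $t^{\kappa/2}$ being absorbed into a slightly larger $\omega$), yielding the full $C^\kappa$-estimate.

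For (iii), I would shift by $\lambda_0 > \omega$, so that $B := A_p + \lambda_0$ is positive by Theorem \ref{t-sgrp0}\ref{i-Hinfty} and $-B$ generates the exponentially decaying semigroup $e^{-\lambda_0 t} S_t$. By the standard Balakrishnan integral representation,
\[
B^{-\theta} f = \frac{1}{\Gamma(\theta)} \int_0^\infty t^{\theta-1} e^{-\lambda_0 t} S_t f \, dt \qquad (f \in L^p,\ \theta > 0).
\]
Taking $C^\kappa$-norms and inserting the smoothing bound from (ii) reduces matters to the scalar integral
\[
\int_0^\infty t^{\theta - 1 - d/(2p) - \kappa/2} \, e^{-(\lambda_0 - \omega) t} \, dt,
\]
which is finite precisely when $\theta > d/(2p) + \kappa/2$ (integrability at $0$) and $\lambda_0 > \omega$ (integrability at $\infty$). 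Hence $B^{-\theta} \colon L^p \to C^\kappa$ is continuous, and because $\dom(A_p^\theta) = \dom(B^\theta) = \mathrm{range}(B^{-\theta})$ for the positive operator $B$, the embedding follows.

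The most delicate point is verifying that the fractional-power domains of $A_p$ and $B = A_p + \lambda_0$ coincide, so that the shift in (iii) is harmless; this is a routine consequence of the bounded $H^\infty$-calculus supplied by Theorem \ref{t-sgrp0}\ref{i-Hinfty}. A secondary technicality in (ii) is the smooth gluing of the near-diagonal H\"older kernel bound to the far-field sup-norm estimate, but since both bounds carry the same spatial scaling $t^{-d/(2p)-\kappa/2}$, the combination is elementary.
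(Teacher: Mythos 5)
Your proposal is correct and follows essentially the same route as the paper: part (i) is quoted from \cite{ERe2}, part (ii) is obtained by the same two-case split ($|x-x'|\le t^{1/2}$ via the H\"older kernel bound and H\"older's inequality, $|x-x'|\ge t^{1/2}$ via the $L^p\to L^\infty$ bound), and part (iii) uses the same integral representation of $B^{-\theta}$ for a shifted positive operator $B=A_p+\lambda_0$ together with the equivalence $\dom(B^\theta)=\dom(A_p^\theta)$. Your additional remarks on absorbing the spurious $t^{\kappa/2}$ factor and on justifying the domain equivalence are sound refinements of points the paper leaves implicit.
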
 
\begin{proof}
Statement~\ref{i-GHB} was shown in \cite[Theorem~7.5]{ERe2}.
We next show Statement~\ref{i-Hkappa}. 
Let $u \in L^p$ and let $x,x' \in \Omega$ with $0<|x-x'| \leq 1$. 
Let $t>0$.
We consider two cases. 

\noindent
{\em Case 1.} Suppose that $|x-x'| \leq t^{1/2}$. \\
Then \eqref{i-kest2} implies that 
\begin{align*}
|(S_t u )(x) - (S_t u)(x')| & \leq \int_\Omega |K_t(x,y)-K_t(x',y)| \, |u(y)|\, dy \\
& \leq c \Big(\frac{|x - x'|}{t^{1/2}}\Big)^\kappa 
    \int_\Omega t^{-d/2}e^{-b\frac{|x-y|^2}{t}}e^{\omega t} |u(y)| \, dy \\
& \leq c \Big(\frac{|x - x'|}{t^{1/2}}\Big)^\kappa t^{-\frac{d}{2p}} 
      e^{\omega t} \|u\|_{L^p},
\end{align*}
where the last step follows from the H\"older inequality.

\noindent
{\em Case 2.} Suppose that $|x-x'| \geq t^{1/2}$. \\
Then trivially,
\[
|(S_t u )(x) - (S_t u)(x')| \leq 2 \|S_t u\|_{L^\infty} 
\leq 2 ct^{-\frac{d}{2p}}e^{\omega t} \|u\|_{L^p} 
\leq 2 c \Big(\frac{|x - x'|}{t^{1/2}}\Big)^\kappa t^{-\frac{d}{2p}}e^{\omega t} \|u\|_{L^p},
\]
where in the second step \eqref{i-sgest} was used with $r=\infty$. 

A combination of both cases implies Statement~\ref{i-Hkappa}. 

Statement~\ref{t-sgrp-7} follows from Statement~\ref{i-Hkappa} and 
the integral representation 
\[
B^{-\theta}
=\frac {1}{\Gamma(\theta)} \int_0^\infty t^{\theta-1} e^{-tB} \, d t, 
\]
(see \cite[(2.6.9)]{Paz}), applied to $B=A_p+\omega+1$. We obtain  
$\dom((A_p + \omega + 1)^\theta) \hookrightarrow C^\kappa$.
But $\dom((A_p + \omega + 1)^\theta) = \dom\bigl (A_p^\theta\bigr )$ with equivalent norms.
\end{proof}

\begin{rem} \label{r-inter}
By Theorem \ref{t-sgrp0}\ref{i-Hinfty} the operator $A_p+1$ admits bounded imaginary powers.
Hence 
\[
\dom(A_p^\theta)=[L^p,\dom(A_p+1)]_\theta
\] 
by \cite[Theorem~1.15.3]{Tri}, if in addition $\theta < 1$.
So $[L^p,\dom(A_p)]_\theta \hookrightarrow C^\kappa$
by Theorem~\ref{t-sgrp}\ref{t-sgrp-7}.
\end{rem}


\section{H\"older regularity for parabolic problems in $L^p$} \label{s-parabolicLp}

We interpret parabolic problems of the form \eqref{e-prob} as the abstract 
Cauchy problems associated to $A$.
Our first theorem is the following.

\begin{theorem} \label{t-parahoeldlp}
Adopt the notation and assumptions as in Theorem~{\rm \ref{t-sgrp}}.
Let $\kappa_*$ be as in Theorem~{\rm \ref{t-sgrp}\ref{i-GHB}}.
Let $T > 0$ and write $J = (0,T)$.
Let $p \in (\frac {d}{2},\infty)$, $\kappa \in (0,\kappa_*]$, $\theta \in (0,1)$
and $s \in (1,\infty)$.
Suppose that 
\[
\frac{d}{2p}+\frac{\kappa}{2} < \theta < 1-\frac{1}{s}
{}.  \]
Then there are $c > 0$ and $\beta \in (0,1)$ such that the following is valid.
Let $f \in L^s(J;L^p)$ and 
$ u_0 \in X_{s,p} := (L^p,\dom(A_p))_{1-\frac{1}{s},s}$.
Then any solution $u$ of the equation 
\begin{equation} \label{e-sol1}
u'+A u=f \in L^s(J;L^p),\quad u(0)=u_0,
\end{equation}
 belongs to $C^\beta(J;C^\kappa)$ and 
\[
\|u\|_{C^\beta(J;C^\kappa)} \le c (\|f\|_{L^s(J;L^p)} + \Vert u_0 \Vert_{X_{s,p}}),
\]
where $\beta=1-\frac {1}{s} -\theta$.
\end{theorem}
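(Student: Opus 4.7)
The plan is to chain together three ingredients already developed in the preparatory section: maximal parabolic regularity, an anisotropic (``mixed derivative'') embedding, and the H\"older embedding for fractional power domains from Theorem~\ref{t-sgrp}\ref{t-sgrp-7}.

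First, I would apply Theorem~\ref{t-sgrp0}\ref{i-MR}: since $A_p$ has maximal parabolic $L^s(J;L^p)$-regularity and $u_0$ lies in the trace space $X_{s,p}=(L^p,\dom(A_p))_{1-1/s,s}$, any solution of \eqref{e-sol1} belongs to $W^{1,s}(J;L^p)\cap L^s(J;\dom(A_p))$ and satisfies the a priori bound
\[
\|u\|_{W^{1,s}(J;L^p)}+\|u\|_{L^s(J;\dom(A_p))}\le c\,(\|f\|_{L^s(J;L^p)}+\|u_0\|_{X_{s,p}}).
\]

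Second, I would invoke the anisotropic embedding: for every $\theta\in(0,1-1/s)$,
\[
W^{1,s}(J;L^p)\cap L^s(J;\dom(A_p))\hookrightarrow C^{1-1/s-\theta}(\bar J;[L^p,\dom(A_p)]_\theta),
\]
which yields precisely the exponent $\beta=1-1/s-\theta$. The underlying mechanism is that $(A_p+\lambda_0)^\theta$ commutes with $\partial_t$ and acts as an isomorphism between $\dom(A_p)$ and $\dom(A_p^{1-\theta})$, so applying it reduces the claim to a scalar-valued Sobolev embedding of the type $W^{1-\theta,s}(\bar J;L^p)\hookrightarrow C^{1-\theta-1/s}(\bar J;L^p)$; the bounded $H^\infty$-calculus from Theorem~\ref{t-sgrp0}\ref{i-Hinfty} is what legitimises carrying $(A_p+\lambda_0)^\theta$ through the time derivative. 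Alternatively, the embedding can be taken off the shelf from Amann's or Pr\"uss--Simonett's treatment of vector-valued parabolic theory.

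Third, I would use Remark~\ref{r-inter}: bounded imaginary powers yield $[L^p,\dom(A_p)]_\theta=\dom(A_p^\theta)$ with equivalent norms. Since $\theta>d/(2p)+\kappa/2$, Theorem~\ref{t-sgrp}\ref{t-sgrp-7} gives $\dom(A_p^\theta)\hookrightarrow C^\kappa$. Concatenating the three embeddings produces $u\in C^\beta(\bar J;C^\kappa)$ together with the claimed estimate. The main obstacle is the anisotropic embedding in the second step: it is the only ingredient not directly extracted from the preceding section, and it depends essentially on bounded imaginary powers --- not merely sectoriality --- in order to trade one time derivative for a fractional power of the elliptic operator. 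Since Theorem~\ref{t-sgrp0}\ref{i-Hinfty} furnishes exactly this, the remaining assembly is routine.
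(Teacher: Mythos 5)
Your proposal is correct and follows the same three-step architecture as the paper: maximal parabolic regularity (Theorem~\ref{t-sgrp0}\ref{i-MR} plus the trace-space statement in Remark~\ref{r-ebed}), then an anisotropic embedding of $W^{1,s}(J;L^p)\cap L^s(J;\dom(A_p))$ into a H\"older space of interpolation-space-valued functions, then Theorem~\ref{t-sgrp}\ref{t-sgrp-7}. The difference lies in the middle step. The paper uses Lemma~\ref{l-inthoelpar}\ref{l-hoeldemb}, which lands in $C^\beta(J;(X,Y)_{\theta,1})$ with the \emph{real} interpolation functor of second index $1$, and proves it by entirely elementary means: the Lipschitz estimate $\|w(t_1)-w(t_2)\|_X\le\|w\|_{W^{1,s}(J;X)}|t_1-t_2|^{1-1/s}$, the sup bound from part \ref{l-contemb}, and the reiteration theorem $(X,Y)_{\theta,1}=(X,(X,Y)_{1-1/s,s})_{\lambda,1}$. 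No functional calculus of $A_p$ enters at this stage, so your assertion that the anisotropic embedding ``depends essentially on bounded imaginary powers --- not merely sectoriality'' is a mis-attribution: since $(X,Y)_{\theta,1}$ is the smallest interpolation space of exponent $\theta$, it embeds into your target $[X,Y]_\theta$, and the elementary real-interpolation version already implies the complex one you invoke. Your sketched mechanism (commuting $(A_p+\lambda_0)^\theta$ past $\partial_t$ and reducing to a scalar Sobolev embedding) is mixed-derivative-theorem territory and would itself require justification, but your fallback of citing Amann is legitimate and is exactly what the paper acknowledges in Remark~\ref{r-amann}. Finally, to pass from the interpolation space to $\dom(A_p^\theta)$ the paper uses only positivity of $A_p+1$ via \cite[Theorem~1.15.2(d)]{Tri} applied to $(L^p,\dom(A_p))_{\theta,1}$, whereas you use Remark~\ref{r-inter} (bounded imaginary powers and complex interpolation); both are available here, but the paper's route shows the result needs less. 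In short: same skeleton, slightly heavier machinery on your side, and one incorrect claim about where bounded imaginary powers are actually needed.
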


Note that 
\[
C^\beta(J;C^\kappa) \subset C^{\min(\beta,\kappa)}(J \times \Omega)
{}.  \]

In preparation for the proof of this theorem, we first recall the notion 
of maximal parabolic regularity.

\begin{definition} \label{d-maxreg}
Let $s \in (1,\infty)$ and let $X$ be a Banach space.
Assume that $B$ is a densely defined closed
operator in $X$. 
Let $T > 0$ and set $J = (0,T)$.
We say that $B$ satisfies \emph {maximal parabolic $L^s(J; X)$ regularity}, 
if there is an isomorphism which maps every
$f\in L^s(J; X)$ to the unique function $u \in W^{1,s}(J; X) \cap
L^s(J; \dom(B))$ satisfying
\[
u' + Bu = f,\quad \quad u(0) = 0.
\]
\end{definition}

\begin{rem} \label{r-ebed}
We recall the following results associated to Definition \ref{d-maxreg}. 
\begin{itemize}
\item 
The property of maximal parabolic $L^s(J;X)$ regularity of
an operator $B$ is independent of the summability index 
$s \in (1, \infty)$ and
the choice of $T$ for the interval $J$, cf.\ \cite{Dor}.
We will say for short that $B$ admits maximal parabolic regularity on $X$.
\item 
If an operator satisfies maximal parabolic regularity on a Banach space
$X$, then its negative generates an analytic semigroup on $X$, cf.\ \cite{Dor}.
In particular, a suitable left half-plane belongs to its resolvent set.
\item
Let $X$ be a Banach space and let $s \in (1,\infty)$ and $T > 0$.
Set $J = (0,T)$.
Let $B$ be an operator in $X$ which admits maximal parabolic regularity.
Then there exists a $c > 0$ such that for all $f \in L^s(J;X)$ and 
$u_0 \in (X,\dom(B))_{1-\frac{1}{s},s}$ there exists a unique
$u \in W^{1,s}(J; X) \cap L^s(J; \dom(B))$ such that 
\[
u' + Bu = f,\quad \quad u(0) = u_0.
\]
Moreover, 
\[
\|u\|_{W^{1,s}(J; X) \cap L^s(J; \dom(B))}
\leq c ( \|f\|_{L^s(J;X)} + \|u_0\|_{(X,\dom(B))_{1-\frac{1}{s},s}}),
\]
cf.\ \cite[Proposition~2.1 (i)$\Rightarrow$(iii)]{Ama4}.
\end{itemize}
\end{rem}

The space of maximal parabolic regularity allows for the following embedding results.

\begin{lemma} \label{l-inthoelpar}
Let $X,Y$ be Banach spaces and assume that $Y$ is continuously embedded into~$X$.
Let $T > 0$ and set $J = (0,T)$. 
\begin{tabel}
\item \label{l-contemb}
If $s\in (1,\infty)$, then
\[
W^{1,s}(J;X) \cap L^s(J;Y) \hookrightarrow C(\overline J;(X,Y)_{1-\frac {1}{s},s}).
\]  
\item \label{l-hoeldemb}
If $s \in (1,\infty)$ and $\theta \in (0,1-\frac {1}{s})$, then 
\[
W^{1,s}(J;X) \cap L^s(J;Y) \hookrightarrow 
C^\beta(J;(X,Y)_{\theta,1}),
\] 
where $\beta=1-\frac {1}{s}-\theta$.
\end{tabel}
\end{lemma}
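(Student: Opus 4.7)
I would prove part (a) by reducing to the classical Lions--Peetre trace theorem: for $Y \hookrightarrow X$, the trace operator $u \mapsto u(0)$ maps $W^{1,s}((0,\infty);X) \cap L^s((0,\infty);Y)$ boundedly onto $(X,Y)_{1-1/s,s}$. Given $u$ on $J$, I first extend it to $(0,\infty)$ by reflection followed by a smooth cut-off so that both the $W^{1,s}(X)$- and $L^s(Y)$-norms are controlled by the originals, then apply the trace theorem at every interior point $t_0 \in \overline J$ (after translation). The trace theorem itself is standard and proved via the $K$-functional characterisation; one can cite \cite[Proposition~2.1]{Ama4} or \cite[Section~1.8]{Tri}.

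For part (b), I would avoid a bare-hands $K$-functional computation by using (a) as input. Fix $u \in W^{1,s}(J;X) \cap L^s(J;Y)$ and $t_1 < t_2$ in $J$ with $h = t_2 - t_1$. From $u(t_2) - u(t_1) = \int_{t_1}^{t_2} u'(\tau)\,d\tau$ and H\"older's inequality in time,
\[
\|u(t_2) - u(t_1)\|_X \leq h^{1-1/s} \|u'\|_{L^s(J;X)}.
\]
Part (a) supplies the uniform bound $\sup_{t \in \overline J} \|u(t)\|_{(X,Y)_{1-1/s,s}} \leq c\|u\|_{W^{1,s}(J;X) \cap L^s(J;Y)}$, hence
\[
\|u(t_2) - u(t_1)\|_{(X,Y)_{1-1/s,s}} \leq 2c \|u\|_{W^{1,s}(J;X) \cap L^s(J;Y)}.
\]

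The key step is the reiteration theorem for real interpolation (e.g.\ \cite[Section~1.10]{Tri}), which in its endpoint form reads
\[
\bigl(X, (X,Y)_{1-1/s,s}\bigr)_{\eta,1} = (X,Y)_{\eta(1-1/s),1} \quad\text{for } \eta \in (0,1).
\]
Setting $\eta := \theta/(1-1/s) \in (0,1)$ makes the left-hand side equal to $(X,Y)_{\theta,1}$, and the classical real-interpolation inequality $\|v\|_{(Z_0,Z_1)_{\eta,1}} \leq c \|v\|_{Z_0}^{1-\eta} \|v\|_{Z_1}^{\eta}$, applied with $Z_0 = X$, $Z_1 = (X,Y)_{1-1/s,s}$ and $v = u(t_2)-u(t_1)$, combines with the two displayed estimates to produce
\[
\|u(t_2) - u(t_1)\|_{(X,Y)_{\theta,1}} \leq c\, h^{(1-1/s)(1-\eta)} \|u\|_{W^{1,s}(J;X) \cap L^s(J;Y)},
\]
and the exponent $(1-1/s)(1-\eta)$ equals $(1-1/s) - \theta = \beta$, as required. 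The main technical subtlety lies in invoking the reiteration identity with $X$ itself as one of the endpoints; this is admissible since $X$ is trivially of class $J(0)$ in the couple $(X,Y)$. If one prefers to sidestep this, the same bound follows from a direct $K$-functional argument: decompose $v = w_0 + w_1$ with $w_0 = [u(t_2) - u(\tau_2)] + [u(\tau_1) - u(t_1)] \in X$ and $w_1 = u(\tau_2) - u(\tau_1) \in Y$, choosing $\tau_i$ near $t_i$ via Chebyshev's inequality so that $\|u(\tau_i)\|_Y \leq c \delta^{-1/s}\|u\|_{L^s(J;Y)}$, then split the integral $\int_0^\infty \rho^{-\theta-1} K(\rho, v)\, d\rho$ at the crossover scale $\rho_0 \sim h \|u'\|_{L^s(J;X)}/\|u\|_{L^s(J;Y)}$; the constraint $\theta < 1-1/s$ is exactly what renders the small-$\rho$ integral convergent and both regimes contribute the expected $h^\beta$ factor.
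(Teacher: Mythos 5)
Your argument is correct and follows essentially the same route as the paper: part (a) is the standard trace/embedding result (which the paper simply cites from Amann), and for part (b) you use exactly the paper's combination of the fundamental theorem of calculus with H\"older's inequality for the $X$-increment, the uniform bound in $(X,Y)_{1-\frac{1}{s},s}$ from (a), the reiteration identity $(X,(X,Y)_{1-\frac{1}{s},s})_{\theta/(1-\frac{1}{s}),1}=(X,Y)_{\theta,1}$, and the multiplicative interpolation inequality, with the same exponent bookkeeping $\beta=(1-\lambda)(1-\frac{1}{s})=1-\frac{1}{s}-\theta$.
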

\begin{proof}
The first part of the lemma is proved in \cite[Chapter III, Theorem~4.10.2]{Ama2}.

In order to prove \ref{l-hoeldemb}, we first note that 
\begin{eqnarray*}
\|w(t_1)-w(t_2)\|_X 
& = & \|\int_{t_1}^{t_2}w'(t) \,dt\|_X 
\le \int_{t_1}^{t_2}\|w'(t)\|_X \,dt  \\
& \le & \Bigl (\int_{t_1}^{t_2} \|w'(t)\|_X^s \, dt \Bigr )^{1/s} |t_1-t_2|^{1-1/s}  \\
& \le & \Bigl (\int_J \|w'(t)\|_X^s \, dt \Bigr )^{1/s} |t_1-t_2|^{1-1/s}  \\
& \leq & \|w\|_{W^{1,s}(J;X)} |t_1-t_2|^{1-1/s} 
\end{eqnarray*}
for all $w \in W^{1,s}(J;X)$ and $t_1, t_2 \in J$ with $t_1 < t_2$.
Moreover, since $0<\theta < 1-\frac {1}{s}$, the reiteration theorem 
\cite[Theorem~1.10.2]{Tri} gives
\[
(X,Y)_{\theta,1}=(X,(X,Y)_{1-\frac {1}{s},s})_{\lambda,1}
,\]
where $\lambda:= \frac {\theta}{1-\frac {1}{s}} < 1$.
Then $\beta=(1-\lambda)(1-\frac {1}{s})$ and
\begin{eqnarray*}
\frac {\|w(t_1)-w(t_2)\|_{(X,Y)_{\theta,1}}}{|t_1-t_2|^\beta} 
& \le & \frac {\|w(t_1)-w(t_2)\|_{X}^{1-\lambda}}{|t_1-t_2|^\beta} 
   \Big(\sum_{j=1}^2\|w(t_j)\|_{(X,Y)_{1-\frac {1}{s},s}} \Big)^\lambda   \\
& \le & \Bigl (\frac {\|w(t_1)-w(t_2)\|_{X}}{|t_1-t_2|^{1-\frac {1}{s}}} \Bigr )^{1-\lambda}
   2 \bigl (\sup_{t \in J}\|w(t)\|_{(X,Y)_{1-\frac {1}{s},s}}\bigr )^\lambda \\
& \le & 2 c^\lambda \,\|w\|_{W^{1,s}(J;X) \cap L^s(J;Y)},
\end{eqnarray*}
where $c$ is the norm of the inclusion in Statement~\ref{l-contemb}.
\end{proof}

\begin{rem} \label{r-amann}
As \ref{l-contemb}, also \ref{l-hoeldemb} in Lemma \ref{l-inthoelpar} is known, cf.\ \cite[Theorem~3]{Ama3}, 
but our  proof is elementary.
\end{rem}

\begin{proof}[{\bf Proof of Theorem \ref{t-parahoeldlp}}]
By Theorem~\ref{t-sgrp0}\ref{i-MR} the operator $A_p$ has maximal parabolic regularity in $L^p$.
Therefore the third point in Remark~\ref{r-ebed} gives that 
the solution $u$ of \eqref{e-sol1} belongs to the space $W^{1,s}(J;L^p) \cap 
L^s(J;\dom(A_p))$ with the estimate
\[
\|u\|_{W^{1,s}(J,L^p) \cap L^s(J;\dom(A_p))} 
\le c (\|f\|_{L^s(J;L^p)} + \Vert u_0 \Vert_{X_{s,p}})
\]
for a suitable $c > 0$.
Putting $X:=L^p$ and $Y:=\dom(A_p)$, Lemma \ref{l-inthoelpar}\ref{l-hoeldemb}
gives $u \in C^\beta(J;(L^p,\dom(A_p))_{\theta,1})$
including the estimate
\begin{equation} \label{e-absch01}
\|u\|_{C^\beta(J;(L^p,\dom(A_p))_{\theta,1})} 
\le c_1\, \|u\|_{W^{1,s}(J,L^p) \cap L^s(J;\dom(A_p))} 
\le c\, c_1 ( \|f\|_{L^r(J;L^p)} + \Vert u_0 \Vert_{X_{s,p}})
\end{equation}
for a suitable $c_1 > 0$.
Since $A_p + 1$ is a positive operator on $L^p$, we have the continuous embedding
\[
(L^p,\dom(A_p))_{\theta,1}
= (L^p,\dom(A_p + 1))_{\theta,1} 
\hookrightarrow \dom\bigl ((A_p + 1)^\theta\bigr )
= \dom(A_p^\theta)
\]
by \cite[Theorem~1.15.2(d)]{Tri}.
This, combined with \eqref{e-absch01} and Theorem~\ref{t-sgrp}\ref{t-sgrp-7}, gives the claim.
\end{proof}


\section{H\"older regularity for parabolic problems in $W^{-1,q}_D$} \label{SHoelpara4}

The treatment of parabolic equations in $L^p$ spaces is quite common; let us 
therefore start this section with some motivation for the consideration
of parabolic equations in $W^{-1,q}_D$. 
If the right hand side  
of the equation (considered at any time point) has a Lebesgue density in the domain 
and if the boundary condition is either homogeneous or purely
Dirichlet, then, e.g.\ $L^p$ spaces are adequate.
Naturally, spaces of type $W^{-1,q}$ come into play when the right-hand side is 
given by a distributional object, as e.g.\ surface charge densities
or thermal sources, concentrated on a $(d-1)$-dimensional surface.
These spaces may also be adequate for studying inhomogeneous Neumann boundary conditions, see 
\cite[Chapter~3.2]{Lio3}, for example, if the right-hand-side in the first equation in 
\eqref{e-prob} is given by $f \in L^s(J;L^{m_0})$  and $0$ on the right hand side of the third 
equation in \eqref{e-prob} is replaced by a function $g \in L^s(J;L^{m_1}(\Upsilon))$ 
with suitable $m_0(d,q), m_1(d,q)\in [1,\infty)$, one can define $F \in L^s(J;W_D^{-1,q})$
 by 
\[
F(t)(\phi) = \int_\Omega f(t) \phi  + \int_{\Upsilon} g(t) \, \phi|_\Upsilon ,\quad
\text{for all }\phi \in W_D^{1,q'},  
\]  and choose $F$ as the right-hand-side in 
the abstract Cauchy problem (see \eqref{e-sol2} below). 
Note also that in general, one cannot replace the condition $f \in L^s(J;W_D^{-1,q})$,
where $q \in (d,\infty)$, by $f \in L^s(J;W_D^{-1,2})$,
because this would not necessarily yield the regularity 
which is needed in particular for the treatment of non-linear problems. 
The aim of this section is to show that for all $q \in (d,\infty)$
the solutions of the parabolic 
problem in $W^{-1,q}_D$ are  
H\"older continuous in space and time.

In order to state the main result of this section, we must introduce 
additional assumptions on $D$ and~$\mu$. 

\begin{assu} \label{a-Ahlf}
Either $D=\emptyset$ or $D$ satisfies the \emph{Ahlfors--David condition}:
There are constants $c_0, c_1 > 0$ and $r_{AD} > 0$, such that 
\begin{equation} \label{e-ahlf}
  c_0 R^{d-1} \le \mathcal H_{d-1} (D \cap B^d_R(x) ) \le c_1 R^{d-1}
\end{equation}
for all $x \in D$ and  $R \in (0,r_{AD}]$.
\end{assu}

\begin{rem} \label{r-surfmeas}
Assumption \ref{a-Ahlf} means the following.
\begin{tabel} 
\item 
The set $D$ is a $(d-1)$-set in the sense 
of Jonsson/Wallin \cite[Chapter~II]{JW}.
\item \label{r-surfmeas:ii} 
On the set $\partial \Omega \cap \bigl(
\bigcup_{x \in  \partial \Upsilon} U_x
\bigr)$, the measure $\mathcal H_{d-1}$ equals the surface measure
$\sigma$ which can be constructed via the bi-Lipschitz charts
$\phi_x$ given in Assumption~\ref{a-LIps}, 
cf.\ \cite[Subsection~3.3.4~C]{EvG} or \cite[Section~3]{HaR2}.
In particular, \eqref{e-ahlf} implies that $\sigma \bigl( D \cap
\bigl( \bigcup _{x \in \partial \Upsilon} U_x
\bigr) \bigr) > 0$, if $\partial \Omega \neq D \neq \emptyset$.
 \end{tabel}
\end{rem}

\begin{assu} \label{assu-coeffi}
$\dom\bigl ((A+1)\bigr )^\frac {1}{2} = W^{1,2}_D$.
\end{assu}

\begin{rem} \label{r-valid}
Assumption~\ref{assu-coeffi} is 
\emph{not} known for arbitrary non-symmetric coefficient functions under
our general assumptions on the geometry of $\Omega$ and $D$.
But many
special cases are available: 
\begin{tabel} 
  \item If Assumption~\ref{assu-coeffi} is satisfied for some coefficient function $\mu$, then
it is also true for the adjoint coefficient function,
cf.\ \cite[Theorems~1 and 2]{Kat3}.
  \item Assumption~\ref{assu-coeffi} is always fulfilled if the coefficient
function $\mu$ takes its values in the set of real \emph{symmetric}
$d\times d$-matrices.
  \item For results on non-symmetric coefficient functions, see \cite{AKM2}.
By a recent result in \cite[Theorem~4.1]{EHT}, Assumption~\ref{assu-coeffi} is valid in 
our geometric setting, if the domain $\Omega$ itself is a $d$-set, cf.\ \cite[Chapter~II]{JW}.
\end{tabel}
\end{rem}

Let us now state the second main result of this paper.

\begin{theorem} \label{t-parahoeld}
Adopt the notation and assumptions as in Theorem {\rm \ref{t-sgrp}} and, in addition,
adopt Assumptions {\rm \ref{a-Ahlf}} and {\rm \ref{assu-coeffi}}. 
Let $\kappa_*$ be as in Theorem~{\rm \ref{t-sgrp}\ref{i-GHB}}.
Moreover, let $q \in (d,\infty)$, $\kappa \in (0, \kappa_*]$, $\theta \in (0,1)$ 
and $s\in (1,\infty)$ be such that 
$\frac{d}{2q}+\frac{\kappa}{2} +\frac{1}{2} < \theta < 1-\frac{1}{s}$.
Then there exists a $c > 0$ such that the following is valid.
Let $f \in L^s(J;W^{-1,q}_D)$ and $u_0 \in X_{s,-1,q}:= (W^{-1,q}_D,\dom(\ca_q))_{1-\frac{1}{s},s}$.
Then any solution $u$ of the equation 
\begin{equation} \label{e-sol2}
u'+\ca_q u=f 
,\quad 
u(0)=u_0,
\end{equation}
belongs to $C^\beta(J;C^\kappa)$ and
\[
\|u\|_{C^\beta(J;C^\kappa)} \le c ( \|f\|_{L^s(J;W^{-1,q}_D)} + \| u_0 \Vert_{X_{s,-1,q}}),
\]
where $\beta = 1-\frac {1}{s} -\theta$.
\end{theorem}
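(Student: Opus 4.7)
The overall plan is to parallel the proof of Theorem \ref{t-parahoeldlp}, replacing the base space $L^p$ by $W^{-1,q}_D$ and the operator $A_p$ by $\ca_q$, and then to translate the resulting interpolation/domain statement back to an $L^q$-framework where the Hölder embedding of Theorem \ref{t-sgrp}\ref{t-sgrp-7} is already available. The additional ingredient compared to the $L^p$-case is a translation between $\ca_q$ and $A_q$, encoded in Lemma \ref{l-domApAq}, which is precisely why Assumption \ref{assu-coeffi} appears and why the exponent condition carries an extra $\tfrac{1}{2}$.

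First, using Lemma \ref{l-domApAq} together with the fact from Theorem \ref{t-sgrp0}\ref{i-Hinfty} that $A_q + \lambda_0$ has a bounded $H^\infty$-calculus on $L^q$, I would conclude that $\ca_q + 1$ is positive and admits maximal parabolic regularity on $W^{-1,q}_D$. By the third item of Remark \ref{r-ebed}, the solution $u$ of \eqref{e-sol2} then satisfies $u \in W^{1,s}(J;W^{-1,q}_D) \cap L^s(J;\dom(\ca_q))$ together with the estimate
\[
\|u\|_{W^{1,s}(J;W^{-1,q}_D) \cap L^s(J;\dom(\ca_q))}
\le c \bigl( \|f\|_{L^s(J;W^{-1,q}_D)} + \|u_0\|_{X_{s,-1,q}} \bigr).
\]

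Next, Lemma \ref{l-inthoelpar}\ref{l-hoeldemb} applied with $X = W^{-1,q}_D$ and $Y = \dom(\ca_q)$ gives
\[
u \in C^\beta \bigl( J;(W^{-1,q}_D,\dom(\ca_q))_{\theta,1} \bigr), \qquad \beta = 1 - \tfrac{1}{s} - \theta,
\]
with a corresponding norm bound. Since $\ca_q + 1$ is positive, \cite[Theorem~1.15.2(d)]{Tri} yields the continuous embedding $(W^{-1,q}_D,\dom(\ca_q))_{\theta,1} \hookrightarrow \dom(\ca_q^\theta)$, exactly as in the proof of Theorem \ref{t-parahoeldlp}.

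Finally, invoking Lemma \ref{l-domApAq} I would identify $\dom(\ca_q^\theta)$ with $\dom(A_q^{\theta - 1/2})$, with equivalent norms; this is available because the hypothesis $\theta > \tfrac{1}{2} + \tfrac{d}{2q} + \tfrac{\kappa}{2}$ forces $\theta > \tfrac{1}{2}$. The remaining condition $\theta - \tfrac{1}{2} > \tfrac{d}{2q} + \tfrac{\kappa}{2}$ is precisely the hypothesis of Theorem \ref{t-sgrp}\ref{t-sgrp-7} applied with $p = q$, which therefore gives $\dom(A_q^{\theta - 1/2}) \hookrightarrow C^\kappa$. Composing the four embeddings above produces the desired estimate in $C^\beta(J;C^\kappa)$. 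The main obstacle is concentrated in the very first step: maximal parabolic regularity on $W^{-1,q}_D$ and the identification $\dom(\ca_q^\theta) = \dom(A_q^{\theta - 1/2})$, both of which are delicate because $\ca_q$ lives on a negative-order space; everything downstream is a routine assembly of interpolation, reiteration, and Theorem \ref{t-sgrp}\ref{t-sgrp-7}. These two ingredients are exactly what Lemma \ref{l-domApAq} (resting on Assumption \ref{assu-coeffi}) is designed to provide.
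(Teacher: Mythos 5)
Your overall architecture coincides with the paper's: maximal parabolic regularity of $\ca_q$ on $W^{-1,q}_D$ gives $u \in W^{1,s}(J;W^{-1,q}_D)\cap L^s(J;\dom(\ca_q))$ with the a priori bound via Remark~\ref{r-ebed}, Lemma~\ref{l-inthoelpar}\ref{l-hoeldemb} upgrades this to $C^\beta(J;(W^{-1,q}_D,\dom(\ca_q))_{\theta,1})$, and that interpolation space is embedded into $C^\kappa$ by shifting from $\ca_q$ to $A_q$ at the cost of $\tfrac12$ in the exponent and then applying Theorem~\ref{t-sgrp}\ref{t-sgrp-7}; the paper packages this last embedding as Lemma~\ref{l-fracpower2}. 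Your final chain, which passes through $\dom(\ca_q^\theta)$ and then identifies it with $\dom(A_q^{\theta-1/2})$, is a harmless reordering of the paper's chain $(W^{-1,q}_D,\dom(\ca_q))_{\theta,1}=(L^q,\dom(A_q+1))_{\theta-\frac12,1}\subset\dom((A_q+1)^{\theta-\frac12})$, but note that the identification of the fractional power domains is not literally the statement of Lemma~\ref{l-domApAq} (that lemma concerns real interpolation spaces); justifying it would require the isomorphism $(\ca_q+1)^{-1/2}\colon W^{-1,q}_D\to L^q$ together with the composition law for fractional powers, so the paper's ordering is the more economical one.

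The one genuine weak point is your first step. Positivity of $\ca_q+1$ and maximal parabolic regularity of $\ca_q$ on $W^{-1,q}_D$ do \emph{not} follow from Lemma~\ref{l-domApAq} together with the bounded $H^\infty$-calculus of $A_q$ on $L^q$: an identity between real interpolation spaces carries no information about the resolvent of $\ca_q$ or about maximal regularity, and in the paper's logical order Lemma~\ref{l-domApAq} is itself a \emph{consequence} of these facts, not their source. The correct input is Theorem~\ref{t-mainsect}, quoted from \cite{ABHR}: items \ref{t-carryover:i} and \ref{t-carryover:iii} give positivity and maximal parabolic regularity directly, and item \ref{t-carryover:ii} --- the isomorphism $(\ca_q+1)^{-1/2}\colon W^{-1,q}_D\to L^q$, which is where Assumption~\ref{assu-coeffi} enters --- is what would allow you to transfer maximal regularity from $A_q$ by similarity if you insisted on deriving rather than quoting it. Once your first step is replaced by a citation of Theorem~\ref{t-mainsect}\ref{t-carryover:i} and \ref{t-carryover:iii}, the remainder of your argument is essentially the paper's proof.
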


For the proof of this theorem, we need some additional results from  
\cite[Section~11]{ABHR}.

\begin{theorem} \label{t-mainsect}
Adopt~Assumptions~{\rm \ref{a-LIps}}, {\rm \ref{a-Ahlf}} and {\rm \ref{assu-coeffi}}.
Let $q \in [2,\infty)$.
Then one has the following.
\begin{tabel}
\item \label{t-carryover:i} 
$\ca_q+1$ is a positive operator in $W^{-1,q}_D$.
\item \label{t-carryover:ii}
$(\ca_q+1)^{-1/2}$ provides a topological 
isomorphism between $W^{-1,q}_D$ and $L^q$.
\item \label{t-carryover:iii} 
$\ca_q$ admits maximal parabolic regularity in $W^{-1,q}_D$.
 \end{tabel}
\end{theorem}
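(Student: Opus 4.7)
The strategy is to regard statement \ref{t-carryover:ii} as the heart of the theorem, from which \ref{t-carryover:i} and \ref{t-carryover:iii} will follow by a standard similarity argument using the properties of $A_q$ on $L^q$ already recorded in Theorem~\ref{t-sgrp0}.

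For \ref{t-carryover:ii}, the plan is to invoke the corresponding result from \cite[Section~11]{ABHR} after verifying that our hypotheses match the ones used there. The starting point is Assumption~\ref{assu-coeffi}, the Kato square root identity at $p=2$; together with the analogous identity for the adjoint form noted in Remark~\ref{r-valid}, this shows by duality that $(\ca+1)^{1/2}$ extends to an isomorphism from $L^2$ onto $W^{-1,2}_D$. To upgrade this $L^2$-isomorphism to the range $q \in [2,\infty)$, one extrapolates using the Gaussian upper heat-kernel bound of Theorem~\ref{t-sgrp0}\ref{i-upperGauss}; the Ahlfors--David condition of Assumption~\ref{a-Ahlf} is what ensures that the boundary traces and the Hardy-type inequalities underlying the Calder\'on--Zygmund machinery in \cite{ABHR} remain valid up to~$D$, while the bi-Lipschitz charts supplied by Assumption~\ref{a-LIps} allow the $\overline{\Upsilon}$-portion of the boundary to be straightened for those estimates.

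Given \ref{t-carryover:ii}, set $\Phi := (\ca_q+1)^{-1/2} \colon W^{-1,q}_D \to L^q$. A short computation on a core shows that $\Phi$ conjugates $\ca_q+1$ on $W^{-1,q}_D$ with $A_q+1$ on $L^q$, since both are realisations of the same form $\gotl+\mathrm{id}$ taken with different ground spaces. Positivity of $A_q+1$ on $L^q$, from Theorem~\ref{t-sgrp0}\ref{i-Hinfty}, therefore transports through $\Phi$ to yield \ref{t-carryover:i}, and the maximal parabolic $L^s(J;L^q)$-regularity of $A_q$ from Theorem~\ref{t-sgrp0}\ref{i-MR} transports through $\Phi$ to give \ref{t-carryover:iii}. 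The genuine obstacle is the proof of \ref{t-carryover:ii}: one cannot deduce an $L^q$-Kato-type isomorphism from the $L^2$-identity in Assumption~\ref{assu-coeffi} by purely abstract means, and it is precisely at this step that the interplay between the Gaussian bounds and the measure-geometric hypotheses on $D$ is indispensable, which is why the argument is carried out in detail in \cite[Section~11]{ABHR}.
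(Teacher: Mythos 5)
The paper does not actually prove Theorem~\ref{t-mainsect}: all three statements are quoted wholesale from \cite[Section~11]{ABHR}, the sentence preceding the theorem making clear that they are imported as known results. Your proposal is therefore not in conflict with anything in the paper; like the paper, it defers the genuinely hard step --- extrapolating the Kato square root identity from the $L^2$ level of Assumption~\ref{assu-coeffi} to the full range of exponents under Assumptions~\ref{a-LIps} and \ref{a-Ahlf} --- to \cite{ABHR}, and your identification of \ref{t-carryover:ii} as the core statement, with \ref{t-carryover:i} and \ref{t-carryover:iii} obtained by transporting the positivity and maximal regularity of $A_q+1$ on $L^q$ (Theorem~\ref{t-sgrp0}\ref{i-Hinfty} and \ref{i-MR}) through the square root isomorphism, is essentially how the cited source organises the argument. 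Two points in your sketch need more care. First, there is an apparent circularity: $(\ca_q+1)^{-1/2}$ cannot be defined via the functional calculus of $\ca_q$ before \ref{t-carryover:i} is known; one must define it as the restriction to $W^{-1,q}_D\hookrightarrow W^{-1,2}_D$ of the operator $(\ca+1)^{-1/2}$ supplied by the $L^2$ theory, the content of \ref{t-carryover:ii} being that this restriction is a bounded bijection onto $L^q$ with bounded inverse. Second, the ``short computation on a core'' conceals the real verification, namely the resolvent identity
\[
(\lambda+1+\ca_q)^{-1}=(\ca_q+1)^{1/2}\,(\lambda+1+A_q)^{-1}\,(\ca_q+1)^{-1/2}
\quad\text{on } W^{-1,q}_D ,
\]
which one checks on the dense subspace where the $L^2$-, $L^q$- and $W^{-1,q}_D$-realisations of the form are consistent and then extends by continuity; this is what makes the conjugated operator equal to $A_q+1$ rather than merely similar to some sectorial operator. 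With these two points made precise, your reconstruction is a valid proof of the theorem, granted the $L^{q'}$ Kato result of \cite{ABHR}.
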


We exploit Theorem \ref{t-mainsect} for the proofs of the following lemmas.

\begin{lemma}\label{l-domApAq}
Adopt the notation and assumptions as in Theorem {\rm \ref{t-sgrp}} and, in addition,
adopt Assumptions {\rm \ref{a-Ahlf}} and {\rm \ref{assu-coeffi}}. 
If $\theta \in (\frac{1}{2},1)$, $\varsigma \in [1,\infty]$ and $q \in (2,\infty)$ then
\[
(W^{-1,q}_D,\dom(\ca_q))_{\theta,\varsigma}
=(L^q,\dom(A_q))_{\theta - \frac {1}{2},\varsigma}.
\]
\end{lemma}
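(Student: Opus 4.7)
The plan is to realize both sides of the claimed identity as the same real interpolation space in the fractional power scale of $B:=\ca_q+1$ on $W^{-1,q}_D$, and then apply reiteration. The linchpin is that $T:=(\ca_q+1)^{1/2}$, which by Theorem~\ref{t-mainsect}\ref{t-carryover:ii} is a topological isomorphism from $L^q$ onto $W^{-1,q}_D$, intertwines the two elliptic realizations. More precisely, since $\ca_q u = A_q u$ for every $u\in\dom(A_q)$, one checks directly that $T(A_q+1)T^{-1}=B$ on $T(\dom(A_q))=\dom(\ca_q)$, i.e.\ $B$ and $A_q+1$ are similar via $T$.

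This similarity has two consequences. First, transferring the bounded $H^\infty$-calculus of $A_q+1$ on $L^q$ provided by Theorem~\ref{t-sgrp0}\ref{i-Hinfty} yields a bounded $H^\infty$-calculus, and in particular bounded imaginary powers, for $B$ on $W^{-1,q}_D$. Second, it gives the identifications
\[
\dom(B^{1/2})=L^q,\qquad \dom(B)=\dom(\ca_q),\qquad \dom(B^{3/2})=\dom(A_q),
\]
all with equivalent norms. The first of these is Theorem~\ref{t-mainsect}\ref{t-carryover:ii}; the second holds by definition; the third can be obtained either from the intertwining or directly by observing that $u\in\dom(B^{3/2})$ iff $u\in\dom(B^{1/2})=L^q$ with $Bu\in L^q$, iff $u\in L^q$ with $\ca_q u\in L^q$, i.e.\ iff $u\in\dom(A_q)$.

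With BIP for $B$ in hand, the reiteration theorem in the fractional power scale (cf.\ \cite[Section~1.15]{Tri}) asserts that for $0\le\alpha_0<\alpha_1$ and $\lambda\in(0,1)$ the real interpolation space $(\dom(B^{\alpha_0}),\dom(B^{\alpha_1}))_{\lambda,\varsigma}$ depends only on $(1-\lambda)\alpha_0+\lambda\alpha_1$. Applied with $(\alpha_0,\alpha_1,\lambda)=(0,1,\theta)$ it reproduces the left-hand side of the lemma; applied with $(\alpha_0,\alpha_1,\lambda)=(\tfrac12,\tfrac32,\theta-\tfrac12)$---admissible because $\theta\in(\tfrac12,1)$ places $\lambda$ in $(0,\tfrac12)$ and one computes $(1-\lambda)\alpha_0+\lambda\alpha_1=\theta$---it reproduces the right-hand side. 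Hence both sides equal the same $B$-fractional-power interpolation space, proving the identity. The main obstacle is the rigorous justification of the similarity $T(A_q+1)T^{-1}=B$ and of the consequent identity $\dom(B^{3/2})=\dom(A_q)$: this relies on the coincidence of the resolvents of $\ca_q$ and $A_q$ on $L^q$ together with the Dunford-type integral representation for fractional powers, which together ensure that the functional calculi of the two operators are compatible under the isomorphism $T$.
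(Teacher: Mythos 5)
Your argument is correct and follows essentially the same route as the paper: both proofs rest on the square-root isomorphism $\dom((\ca_q+1)^{1/2})=L^q$ from Theorem~\ref{t-mainsect}\ref{t-carryover:ii} combined with the reiteration theorem in the fractional-power scale of the positive operator $\ca_q+1$. The paper matches the two interpolation couples via the identity $\dom(\ca_q+1)=\dom((A_q+1)^{1/2})$ rather than your equivalent $\dom((\ca_q+1)^{3/2})=\dom(A_q+1)$, and it gets by with positivity and the $J(\tfrac12)\cap K(\tfrac12)$ classes alone (no similarity transform or transference of the $H^\infty$-calculus is needed), but these are presentational differences only.
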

\begin{proof}
Theorem \ref{t-mainsect}\ref{t-carryover:ii} gives $\dom((\ca_q + 1)^{1/2})=L^q$, which implies
that $\dom(\ca_q + 1)=\dom((A_q+1)^{1/2})$.
By Theorems~\ref{t-sgrp}\ref{i-MR} and \ref{t-mainsect}\ref{t-carryover:i}
both operators, $\ca_q +1$ and $A_q+1$,
are positive in $W^{-1,q}_D$ and $L^q$, respectively.
By \cite[Subsection 1.10.1 and Theorem~1.15.2(d)]{Tri} the space
$\dom(\ca_q + 1)^{1/2}$ 
belongs to the class $J(\frac {1}{2}) \cap K(\frac {1}{2})$ between the spaces 
$W^{-1,q}_D$ and $\dom(\ca_q )$ and the space 
$\dom(A_q+1)^{1/2}$ belongs to the class 
$J(\frac {1}{2}) \cap K(\frac {1}{2})$ between the spaces $L^q$ and $\dom(A_q+1)$. 
Therefore the reiteration theorem for real interpolation
\cite[Theorem~1.10.2]{Tri} gives
\begin{align*}
(W^{-1,q}_D,\dom(\ca_q))_{\theta,\varsigma} 
&  =(\dom((\ca_q+1)^{1/2}),\dom(\ca_q+1))_{2\theta-1,\varsigma}\\
& =(L^q,\dom((A_q+1)^{1/2}))_{2\theta-1,\varsigma}  \\
& =(L^q,\dom(A_q+1))_{\theta - \frac {1}{2},\varsigma}
\end{align*}
as requested.
\end{proof}

\begin{lemma}\label{l-fracpower2}
Adopt the notation and assumptions as in Theorem {\rm \ref{t-sgrp}} and, in addition,
adopt Assumptions {\rm \ref{a-Ahlf}} and {\rm \ref{assu-coeffi}}. 
Let $\kappa_*$ be as in Theorem~{\rm \ref{t-sgrp}\ref{i-GHB}},
$\kappa \in (0,\kappa_*]$
and $\theta \in (0,1)$ with $\theta > \frac{d}{2q} + \frac{\kappa}{2} + \frac{1}{2}$.
Then
\[
(W^{-1,q}_D,\dom(\ca_q))_{\theta,1} \hookrightarrow C^\kappa.
\]
\end{lemma}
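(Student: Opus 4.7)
The plan is to combine Lemma~\ref{l-domApAq}, which converts the interpolation couple over $W^{-1,q}_D$ into a couple over $L^q$, with the already-established embedding of fractional domain spaces of $A_q$ into $C^\kappa$ given by Theorem~\ref{t-sgrp}\ref{t-sgrp-7}.

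First I would note that the hypothesis $\theta > \frac{d}{2q}+\frac{\kappa}{2}+\frac{1}{2}$ in particular forces $\theta > \frac{1}{2}$, so Lemma~\ref{l-domApAq} applies with $\varsigma = 1$ and yields
\[
(W^{-1,q}_D,\dom(\ca_q))_{\theta,1}
= (L^q,\dom(A_q))_{\theta-\frac{1}{2},1}.
\]
Setting $\theta' := \theta - \frac{1}{2}\in (0,\frac{1}{2})$, the assumption translates into the key inequality $\theta' > \frac{d}{2q} + \frac{\kappa}{2}$, which is exactly the condition needed to feed into Theorem~\ref{t-sgrp}\ref{t-sgrp-7}.

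Next I would pass from the real interpolation space to a fractional domain. Since $A_q + 1$ is positive on $L^q$ by Theorem~\ref{t-sgrp0}\ref{i-Hinfty}, \cite[Theorem~1.15.2(d)]{Tri} gives
\[
(L^q,\dom(A_q))_{\theta',1}
= (L^q,\dom(A_q+1))_{\theta',1}
\hookrightarrow \dom\bigl((A_q+1)^{\theta'}\bigr)
= \dom(A_q^{\theta'}),
\]
where the last identification of spaces (with equivalent norms) is the one used at the end of the proof of Theorem~\ref{t-sgrp}. Finally, Theorem~\ref{t-sgrp}\ref{t-sgrp-7}, applied with $p = q$ and the exponent $\theta'$, yields $\dom(A_q^{\theta'}) \hookrightarrow C^\kappa$, chaining the embeddings into the desired conclusion.

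There is no real obstacle here; the lemma is essentially a bookkeeping corollary of Lemma~\ref{l-domApAq} together with the previously proved $L^q$-result. The only point requiring mild care is checking that the shift in exponents ($\theta \mapsto \theta - \frac{1}{2}$) induced by Lemma~\ref{l-domApAq} matches exactly with the quantitative threshold in Theorem~\ref{t-sgrp}\ref{t-sgrp-7}, which is why the hypothesis is phrased with the extra $\frac{1}{2}$ on the right-hand side.
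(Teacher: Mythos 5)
Your argument is correct and is essentially identical to the paper's proof: both pass through Lemma~\ref{l-domApAq} to rewrite the interpolation space over $L^q$, then use the positivity of $A_q+1$ and \cite[Theorem~1.15.2(d)]{Tri} to embed into $\dom((A_q+1)^{\theta-\frac12})$, and conclude with Theorem~\ref{t-sgrp}\ref{t-sgrp-7}. Your write-up just makes the exponent bookkeeping ($\theta'=\theta-\frac12$ and the threshold check) more explicit than the paper does.
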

\begin{proof}
If follows from Lemma~\ref{l-domApAq} and \cite[Theorem~1.15.2(d)]{Tri}
that 
\[
(W^{-1,q}_D,\dom(\ca_q))_{\theta,1}
= (L^q,\dom(A_q + 1))_{\theta - \frac {1}{2},1} 
\subset \dom((A_q + 1)^{\theta -\frac {1}{2}}).
\]
Now an application of Theorem\ref{t-sgrp}\ref{t-sgrp-7}
gives the claim.
\end{proof}

\begin{proof}[{\bf Proof of Theorem \ref{t-parahoeld}}]
By Theorem~\ref{t-mainsect}\ref{t-carryover:iii}, the solution satisfies 
$u \in W^{1,s}(J,W^{-1,q}_D) \cap L^s(J;\dom(\ca_q))$
and there is a suitable $c > 0$ such that 
\[
\|u\|_{W^{1,s}(J,W^{-1,q}_D) \cap L^s(J;\dom(\ca_q))} 
\le c (\|f\|_{L^s(J;W^{-1,q}_D)} + \| u_0 \|_{X_{s,-1,q}}).
\]
Next Lemma~\ref{l-inthoelpar} gives
\[
W^{1,s}(J;W^{-1,q}_D) \cap L^s(J;\dom(\ca_q)) \hookrightarrow 
C^\beta(J;(W^{-1,q}_D,\dom(\ca_q))_{\theta,1})
{}.  \] 
Then the theorem is a consequence of Lemma~\ref{l-fracpower2}.
\end{proof}


\small
\noindent
{\sc K. Disser,
Weierstrass Institute for Applied Analysis and Stochastics,
Mohrenstr.~39, 
10117 Berlin, 
Germany}  \\
{\em E-mail address}\/: {\bf karoline.disser@wias-berlin.de}

\mbox{}

\noindent
{\sc A.F.M. ter Elst,
Department of Mathematics,
University of Auckland,
Private bag 92019,
Auckland 1142,
New Zealand}  \\
{\em E-mail address}\/: {\bf terelst@math.auckland.ac.nz}

\mbox{}

\noindent
{\sc J. Rehberg,
Weierstrass Institute for Applied Analysis and Stochastics,
Mohrenstr.~39, 
10117 Berlin, 
Germany}  \\
{\em E-mail address}\/: {\bf rehberg@wias-berlin.de}

\end{document}